\newcommand{\N}{\ensuremath{\mathbb{N}}}
\newcommand{\T}{\ensuremath{\mathbb{T}}}
\newcommand{\NZ}{\ensuremath{\mathbb{N}_{0}}}
\newcommand{\Z}{\ensuremath{\mathbb{Z}}}
\newcommand{\R}{\ensuremath{\mathbb{R}}}
\newcommand{\C}{\ensuremath{\mathbb{C}}}
\newcommand{\ii}{\textnormal{i}}
\newcommand{\e}{\textnormal{e}}
\newcommand{\supp}{\textnormal{supp}}
\newcommand{\eip}[1]{\textnormal{e}^{2\pi\ii{#1}}}
\newcommand{\eim}[1]{\textnormal{e}^{-2\pi\ii{#1}}}
\newcommand{\norm}[1]{\left\Vert #1\right\Vert}
\newcommand{\set}[1]{\left\{ #1 \right\}}
\DeclareMathOperator*{\diag}{diag}
\newtheorem{thm}{Theorem}[section]
\newtheorem{lemma}[thm]{Lemma}
\newtheorem{remark}[thm]{Remark}
\newtheorem{definition}[thm]{Definition}
\newtheorem{example}[thm]{Example}
\newtheorem{corollary}[thm]{Corollary}
\newtheorem{proposition}[thm]{Proposition}
\numberwithin{equation}{section}
\numberwithin{table}{section}
\numberwithin{figure}{section}
\newcommand{\bend}{\hspace*{0ex} \hfill \hbox{\vrule height
    1.5ex\vbox{\hrule width 1.4ex \vskip 1.4ex\hrule  width 1.4ex}\vrule
    height 1.5ex}}
\long\def\symbolfootnote[#1]#2{\begingroup%
\def\thefootnote{\fnsymbol{footnote}}\footnote[#1]{#2}\endgroup}
\newcommand{\dd}{\mathrm{d}}
\renewcommand{\mathbf}[1]{\ensuremath{\boldsymbol{#1}}}
\newcommand{\rank}{ \operatorname{rank}}
\newcommand{\im}{\operatorname{im}}
\renewcommand{\thefootnote}{\fnsymbol{footnote}}
\title{Prony's method under an almost sharp multivariate Ingham inequality}
\date{\today}
\date{}
\author{
Stefan Kunis\footnotemark[2]
\and H.~Michael M\"oller\footnotemark[3]
\and Thomas Peter\footnotemark[2]
\and Ulrich von der Ohe\footnotemark[2]}
\begin{document}

\maketitle

\begin{abstract}
The parameter reconstruction problem in a sum of Dirac measures from its low frequency trigonometric moments is well understood in the univariate case and has a sharp transition of identifiability with
respect to the ratio of the separation distance of the parameters and the order of moments.
Towards a similar statement in the multivariate case, we present an Ingham inequality which improves the previously best known dimension-dependent constant from square-root growth to a logarithmic one. 
Secondly, we refine an argument that an Ingham inequality implies identifiability in multivariate Prony methods to the case of commonly used max-degree by a short linear algebra argument, closely related to
a flat extension principle and the stagnation of a generalized Hilbert function.

\medskip

\noindent\textit{Key words and phrases} :
frequency analysis,
exponential sum,
moment problem,
super-resolution,
Ingham inequality.
\medskip

\noindent\textit{2010 AMS Mathematics Subject Classification} : \text{
65T40, 
42C15, 
30E05, 
65F30 
}
\end{abstract}

\footnotetext[2]{
  Osnabr\"uck University, Institute of Mathematics
  \texttt{\{skunis,petert,uvonderohe\}@uos.de}
}

\footnotetext[3]{
  TU Dortmund, Fakult\"at f\"ur Mathematik
  \texttt{moeller@mathematik.tu-dortmund.de}
}

\section{Introduction}
\label{sect:intro}
 The reconstruction of a Dirac ensemble from its low frequency trigonometric moments or equivalently the parameter reconstruction problem in exponential sums has been studied since \cite{Pr95} as Prony's method,
 see e.g.~\cite{Sc86,RoKa90,HuSa90,VeMaBl02,PoTa10,FiMhPr12} and the survey \cite{PlTa14}.
 Popular different approaches to this reconstruction problem include, without being exhaustive, convex optimization \cite{CaFe14,CaFe13,BeDeFe15a,BeDeFe15b}, where the problem has been termed `super-resolution' and can be solved via a lifting technique by semidefinite optimization, and maximum likelihood techniques \cite{TuKu82,BrMa86,Ca88,ClSc92,RaAr92,DM94,NaKa94,LeVaVHDM01}.
 Multivariate variants of Prony-like methods have been considered e.g. in~\cite{JiSiBe01,ShDr07,AnCaHo10,PoTa132,PlWi13,DiIs15,KuPeRoOh16,Sa16,KuMoOh16} using generic arguments, projections to univariate problems, or
 Gr\"obner basis techniques.
 
 The previously best known condition for an Ingham inequality to hold is $nq>\sqrt{d}/2$, where $n$ denotes the order of the used moments, $q$ the separation distance of the parameters, and $d$ the dimension, respectively. Together with a Gr\"obner basis construction which asks for a total-degree setting and thus introduces a factor $d$, this results in a deterministic a-priori bound $(n-d-1)q>d^{3/2}$ for the success of Prony's method, see also \cite{KuMoOh16}.
 
 In this paper we refine an appropriate Ingham inequality as well as the algebra techniques to improve this a-priori condition to $(n-1)q>3+2\log d$.
 After fixing notation, Section \ref{sec:ingham} improves the Ingham inequality \cite{In36,KoLo04,PoTa132} by constructing a function with compact support in space domain and zero crossing at an $\ell^p$-ball in frequency domain.
 In Section \ref{sec:prony}, we refine our previous analysis \cite{KuMoOh16} of the vanishing ideal of the parameter set and prove that an interpolation condition, equivalent to some full rank condition, implies that the parameters can be identified by polynomials of certain max-degree.
 In total, the multivariate Prony method correctly identifies the parameters under a deterministic condition which is sharp up to a logarithmic factor in the space dimension.

\section{Main results}\label{sect:main}
In what follows, we establish a deterministic condition for the success of Prony's method relying on two ingredients - a new Ingham inequality and an analysis of the parameters' vanishing ideal.
Ingham's inequality generalizes Parseval's identity in the sense that a norm equality between a space and a frequency representation of a vector is replaced by a certain norm equivalence.
With respect to the identifiability in Prony's method, we are interested in establishing conditions on the parameters such that a non trivial lower bound in this inequality exists.
Theorem \ref{thm:ingham} and Corollary \ref{cor:SepVand} give such a result under a separation condition on the parameters and improve upon \cite{In36,KoLo04,PoTa132} by a smaller space dimension dependent constant.
Subsequently, Theorem \ref{thm:kerA} and Corollary \ref{cor:prony} apply the established Ingham inequality to prove success of Prony's method by adapting standard arguments from
algebraic geometry \cite[App.~B]{Va98} to our so-called max-degree setting.

\subsection{Preliminaries}\label{sec:prelim}
Throughout the paper, $d\in\N$ always denotes the dimension and $\T^d:=\{z\in\C:|z|=1\}^d$ the torus with parameterization $\T^d\ni z=\eim{t}$ for a unique $t\in[0,1)^d$.
Now let $M\in\N$, pairwise distinct parameters $t_j\in[0,1)^d$, $j=1,\hdots,M$, be given, and define their (wrap-around) separation distance by
\begin{equation*}
 q:=\min_{r\in\Z^d,\;j\ne\ell}\|t_j-t_{\ell}+r\|_{\infty}.
\end{equation*} 
For given coefficients $\hat{f}_j\in\C\setminus\{0\}$, the trigonometric moment sequence of the complex Dirac ensemble
$\tau:\mathcal{P}([0,1)^d)\rightarrow\C$, $\tau=\sum_{j=1}^M\hat{f}_j\delta_{t_j}$, is the exponential sum
\begin{equation*}
f\colon\Z^d\to\C,
\quad
k\mapsto\int_{[0,1)^d}\eim{kt}\dd\tau(t)
=\sum_{j=1}^M\hat{f}_j z_j^k,
\end{equation*}
with parameters $z_j:=\eim{t_j}=(\eim{t_{j,1}},\hdots,\eim{t_{j,d}})\in\T^d$.
A convenient choice for the truncation of this sequence is $\|k\|_{\infty}:=\max\{|k_1|,\hdots,|k_d|\}\le n$ and our aim is to reconstruct the
parameters $t_j\in[0,1)^d$ and coefficients $\hat f_j\in\C\setminus\{0\}$ from the trigonometric moments $f(k)$, $k\in\Z^d$, $\|k\|_{\infty}\le n$.

Prony's method tries to realize the parameters $z_j$ as common zeros of certain polynomials as follows.
We identify $\C^{(n+1)^d}$ with the space of polynomials of max-degree $n$, i.e., $\Pi_n:=\{p:p(z)=\sum_{k\in\NZ^d,\|k\|_{\infty}\le n} p_k z^k\}$, and for $K\subset\C^{(n+1)^d}$ we let
\begin{equation*}
 V(K):=\{z\in\C^d:p(z)=0\text{ for all } p\in K\}.
\end{equation*}
Given the above moments, it turns out that an appropriate set of polynomials is the kernel of the multilevel Toeplitz matrix
\begin{equation*}
  T:=T_n:=\left(f(k-\ell)\right)_{k,\ell\in\NZ^d,|k|,|\ell|\le n}\in\C^{(n+1)^d\times(n+1)^d}.
 \end{equation*}
 Direct computation easily shows the factorization $T=A^* D A$ with diagonal matrix $D:=\diag(\hat f_j)\in\C^{M\times M}$ and Vandermonde matrix
\begin{equation*}
  A:=A_n:=\left(z_j^k\right)_{\substack{j=1,\dots,M\\k\in\NZ^d,\|k\|_{\infty}\le n}}\in\C^{M\times(n+1)^d},\qquad z_j:=\eim{t_j}\in\C^d,
\end{equation*}
which is our entry point for the subsequent analysis of the problem.

\subsection{Ingham's inequality}\label{sec:ingham}
The univariate case is well understood and the popular paper \cite{Mo15} used a so-called Beurling-Selberg majorant and minorant to prove a discrete Ingham inequality (which has been generalized to
the unit disk recently \cite{AuBo17}).
Tensorizing the majorant easily gives an upper bound in a multivariate discrete Ingham inequality but simple attempts to provide a minorant and thus a lower bound failed, see e.g.~\cite{Li15}.
A construction of a valid minorant by linear combinations of univariate majorants and minorants can be found e.g.~in \cite{CaGoKe16}, these seem to become effective only if $nq>C\cdot d$, i.e.,
show a linear dependence in the space dimension.
A similar bound follows from \cite{KuPo07} using localized trigonometric polynomials and a packing argument.
Moreover note that the Fourier analytic approach to sphere packing problems \cite{CoEl03,Vi16} asks for a similar but not quite the same construction of a function.
The classical and currently best known construction \cite{In36,KoLo04,PoTa132} uses an eigenfunction of the Laplace operator on the cube and becomes effective if $nq>\frac{1}{2}\sqrt{d}$.
Subsequently, we replace the Laplace operator by the sum of higher order derivatives, improve the previous bound for $d>5$, and show a logarithmic growth of the space dependent constant.

\begin{lemma}\label{lem:PropPhi}
Let $d,r\in\N$, $p:=2r$, $n,q>0$, and the functions $\varphi:\R\rightarrow\R$,
\begin{equation*}
 \varphi(x):=\begin{cases}
           \left(1-\left(\frac{2x}{q}\right)^2\right)^{r},& |x|<\frac{q}{2},\\
           0, & \text{otherwise},
          \end{cases}
\end{equation*}
and $\psi:\R^d\rightarrow\R$,
\begin{equation*}
  \psi
  =\left(\left(2\pi n\right)^p-(-1)^r\sum_{s=1}^d \frac{\partial^{p}}{\partial x_s^{p}}\right)\bigotimes_{\ell=1}^d \varphi*\varphi,
 \end{equation*}
be given, see Figures \ref{fig:p2} and \ref{fig:p4} for examples, then
\begin{enumerate}
 \item the Fourier transform $\hat\psi(v):=\int_{\R^d}\psi(x)\eim{vx}\dd x$ is bounded and obeys
 \begin{equation*}
   \hat\psi(v)\begin{cases}
	\ge 0, & \|v\|_{p}\le n\\
	\le 0, & \|v\|_{p}\ge n,
              \end{cases}
 \end{equation*}
 \item $\supp\psi=[-q,q]^d$,
 \item and $\psi(0)>0$ if $nq>C_p \sqrt[p]{d}$ with $C_p\le (2p+3)/(\e\pi)$.
\end{enumerate}
\end{lemma}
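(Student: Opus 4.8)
\emph{Proof plan.}
Abbreviate $h:=\varphi*\varphi$ and $g:=\bigotimes_{\ell=1}^{d}h$, so that $\psi=(2\pi n)^{p}g-(-1)^{r}\sum_{s=1}^{d}\partial_{s}^{p}g$, writing $\partial_{s}$ for $\partial/\partial x_{s}$.
First I would record that $\varphi$ is even, nonnegative, piecewise polynomial, lies in $C^{r-1}(\R)$, is supported on $[-q/2,q/2]$, and has $\varphi^{(r)}$ bounded with compact support.
Hence $h$ is even, $\supp h=[-q,q]$, and $h\in C^{p}(\R)$, because $h^{(p)}=\varphi^{(r)}*\varphi^{(r)}$ is a convolution of two compactly supported $L^{2}$-functions and therefore continuous.
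Consequently $g\in C^{p}(\R^{d})$ with $\supp g=[-q,q]^{d}$, and $\psi$ is continuous with $\supp\psi\subseteq[-q,q]^{d}$.
For the reverse inclusion in~(ii) I would note that $\psi$ is piecewise polynomial and cannot vanish on any nonempty open subcube of $(-q,q)^{d}$: on such a subcube $g>0$, so dividing $\psi\equiv0$ by $g$ and separating variables would force $h^{(p)}=\lambda h$ on a subinterval of $(0,q)$ for a constant $\lambda$, which is impossible since $h|_{(0,q)}$ is a nonzero polynomial with a zero of order $2r+1>p$ at the endpoint $q$; thus $\supp\psi=[-q,q]^{d}$.

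Part~(i) is Fourier calculus. Using $\widehat{\partial_{s}^{p}g}(v)=(2\pi\ii v_{s})^{p}\hat g(v)$ together with $\ii^{2r}=(-1)^{r}$ and $p=2r$, one obtains
\[
\hat\psi(v)=(2\pi)^{p}\Bigl(n^{p}-\textstyle\sum_{s=1}^{d}|v_{s}|^{p}\Bigr)\hat g(v)=(2\pi)^{p}\bigl(n^{p}-\|v\|_{p}^{p}\bigr)\hat g(v),
\]
where $\hat g(v)=\prod_{\ell=1}^{d}\widehat{\varphi*\varphi}(v_{\ell})=\prod_{\ell=1}^{d}\hat\varphi(v_{\ell})^{2}\ge0$ because $\varphi$ real and even makes $\hat\varphi$ real.
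Thus $\hat\psi(v)$ has the sign of $n^{p}-\|v\|_{p}^{p}$, which is the claimed sign pattern, and $\hat\psi$ is bounded since $\psi\in L^{1}(\R^{d})$.

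For~(iii) I would evaluate in the space domain at the origin. Since $g(0)=h(0)^{d}$ and $\partial_{s}^{p}g(0)=h^{(p)}(0)h(0)^{d-1}$,
\[
\psi(0)=h(0)^{d-1}\Bigl((2\pi n)^{p}h(0)-(-1)^{r}d\,h^{(p)}(0)\Bigr),
\]
with $h(0)=\int_{\R}\varphi^{2}>0$, and $(-1)^{r}h^{(p)}(0)=\int_{\R}(\varphi^{(r)})^{2}>0$ from $h^{(p)}=\varphi^{(r)}*\varphi^{(r)}$ and $\varphi^{(r)}(-x)=(-1)^{r}\varphi^{(r)}(x)$.
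Hence $\psi(0)>0$ is equivalent to $(2\pi n)^{p}\int_{\R}\varphi^{2}>d\int_{\R}(\varphi^{(r)})^{2}$.
Substituting $x=qu/2$ rewrites the quotient of these integrals as $(2/q)^{p}\rho$, where $\rho$ is the ratio of $\int_{-1}^{1}\bigl(\tfrac{\dd^{r}}{\dd u^{r}}(1-u^{2})^{r}\bigr)^{2}\dd u$ to $\int_{-1}^{1}(1-u^{2})^{2r}\dd u$, so the condition becomes $(\pi nq)^{p}>\rho\,d$, i.e.\ $nq>\rho^{1/p}\sqrt[p]{d}/\pi$, and I would set $C_{p}:=\rho^{1/p}/\pi$.
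By the Rodrigues formula, $\tfrac{\dd^{r}}{\dd u^{r}}(1-u^{2})^{r}=(-1)^{r}2^{r}r!\,P_{r}$ with $P_{r}$ the $r$-th Legendre polynomial, so the numerator equals $2^{2r}(r!)^{2}\cdot\tfrac{2}{2r+1}$, while the denominator is the Beta integral $4^{2r+1}(2r)!(2r+1)!/(4r+2)!$; combining gives the closed form $\rho=\tfrac{(r!)^{2}}{2^{2r+1}}\binom{4r+2}{2r+1}$.

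It remains to check $\rho^{1/p}\le(2p+3)/\e=(4r+3)/\e$.
I would use the bounds $\binom{4r+2}{2r+1}\le4^{2r+1}/\sqrt{\pi(2r+1)}$ and $r!\le\e\,r^{r+1/2}\e^{-r}$ to get $\rho\le\tfrac{2\e^{2}r}{\sqrt{\pi(2r+1)}}(p/\e)^{p}$, hence $\rho^{1/p}\le(p/\e)\bigl(\tfrac{2\e^{2}r}{\sqrt{\pi(2r+1)}}\bigr)^{1/(2r)}$, and then verify that $\bigl(\tfrac{2\e^{2}r}{\sqrt{\pi(2r+1)}}\bigr)^{1/(2r)}\le2+\tfrac{3}{2r}$ for every $r\in\N$; this is loosest at $r=1$, where it reads $\approx2.2\le3.5$, whereas the left-hand side drops below $2$ already for $r\ge2$.
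I expect this uniform-in-$r$ numerical step to be the main obstacle, since it needs sufficiently sharp versions of Stirling's bound and of the central-binomial estimate; making the support statement an equality rather than an inclusion likewise relies on the separation-of-variables remark above, while everything else is routine.
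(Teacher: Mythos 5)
Your plan reproduces the paper's argument essentially verbatim: the same tensor construction, the same Fourier factorization $\hat\psi(v)=(2\pi)^p\bigl(n^p-\|v\|_p^p\bigr)\prod_{\ell}\hat\varphi(v_\ell)^2$ for (i), and the same evaluation $\psi(0)=h(0)^{d-1}\bigl((2\pi n)^p h(0)-(-1)^r d\,h^{(p)}(0)\bigr)$ reduced via the Rodrigues/Legendre identity to a ratio of Beta-type integrals for (iii), with your $\rho=\frac{(r!)^2}{2^{2r+1}}\binom{4r+2}{2r+1}$ agreeing with the paper's Gamma-quotient form of $C_p$ (e.g.\ $\rho=5/2$ for $p=2$ and $\rho=63/2$ for $p=4$). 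The only cosmetic difference is in the final elementary estimate: the paper bounds $C_p\le\Gamma(p+\tfrac32)^{1/p}/\pi\le(2p+3)/(\e\pi)$ by monotonicity and Stirling, whereas you use the central-binomial bound plus Stirling and a short uniform-in-$r$ numerical check; both are routine and give the same constant.
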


\begin{proof}
 We have the weak $r$-th derivative
 \begin{equation*}
 \varphi^{(r)}(x)=
  \begin{cases}
           (-1)^r q^{-r} 4^r r! P_r\left(\frac{2 x}{q}\right),& |x|<\frac{q}{2},\\
           0, & \text{otherwise},
          \end{cases} 
 \end{equation*}
 where $P_r$ denotes the $r$-th Legendre polynomial with normalization $P_r(1)=1$.
 This derivative is of bounded variation and thus, the Fourier transform of the function $\varphi$ obeys $|\hat\varphi(v)|\le C(1+|v|)^{-r-1}$.
 Now the first assertion follows from
 \begin{equation*}
  \hat \psi(v)
  =\left(\left(2\pi n\right)^p-\sum_{s=1}^d \left(2\pi v_s\right)^p\right) \prod_{\ell=1}^d \left(\hat\varphi(v_\ell)\right)^2.
 \end{equation*}
 The second claim easily follows from $\supp\varphi=[-\frac{q}{2},\frac{q}{2}]$.
 Moreover, we have
 \begin{equation*}
  \varphi*\varphi(0)=\frac{q}{2}\int_{-1}^1 \left(1-x^2\right)^p \dd x = \frac{q\sqrt{\pi} p!}{2\Gamma(p+\frac{3}{2})}
 \end{equation*}
 and noting $\varphi^{(r)}$ being odd for $r$ odd, we get
 \begin{equation*}
 \varphi^{(r)}*\varphi^{(r)}(0)
  =\left(\frac{4^r r!}{q^r}\right)^2 \frac{(-1)^r q}{2} \int_{-1}^1 (P_r(x))^2 \dd x
  =\frac{4^p (r!)^2 (-1)^r}{(p+1)q^{p-1}}.
 \end{equation*}
 Finally, this implies
 \begin{align*}
  \psi(0)
  &=\left(\varphi*\varphi(0)\right)^{d-1}\left(\left(2\pi n\right)^p \varphi*\varphi(0) - (-1)^r d \cdot \varphi^{(r)}*\varphi^{(r)}(0)\right)\\
  &=\left(\varphi*\varphi(0)\right)^{d-1}\left(\left(2\pi n\right)^p \frac{q\sqrt{\pi} p!}{2\Gamma(p+\frac{3}{2})} - \frac{d 4^p (r!)^2}{(p+1)q^{p-1}}\right)
  >0,
 \end{align*} 
 provided the term in brackets is positive. Using the Legendre duplication formula for $\Gamma(p+2)$, this is equivalent to $nq>C_p \sqrt[p]{d}$ with a constant
 \begin{equation}\label{eq:Cp}
  C_p:=\left(\frac{\Gamma\left(\frac{p}{2}+1\right)\Gamma\left(p+\frac{3}{2}\right)}{\pi^p\Gamma\left(\frac{p+3}{2}\right)}\right)^{\frac{1}{p}}
  \le \frac{\left(\Gamma\left(p+\frac{3}{2}\right)\right)^{\frac{1}{p}}}{\pi} \le \frac{2p+3}{\e\pi},
 \end{equation}
 where the first inequality follows from monotonicity and the second by Stirling's approximation.
\end{proof}

\begin{remark}\label{rem:p2}
 The construction of Lemma \ref{lem:PropPhi} for $p=2$ reads as
 \begin{equation*}
  \varphi(x):=\begin{cases}
           1-\left(\frac{2x}{q}\right)^2,& |x|<\frac{q}{2},\\
           0, & \text{otherwise},
          \end{cases}
 \end{equation*}
 leading to 
 \begin{equation*}
  \varphi*\varphi(0)=\frac{q}{2}\int_{-1}^{1} (1-x^2)^2\dd x=\frac{8q}{15},\qquad
  \varphi'*\varphi'(0)=-\frac{4q}{3}\cdot \frac{4}{q^2},
 \end{equation*}
 and we get $\psi(0)>0$ for $nq>\frac{\sqrt{5/2}}{\pi} \cdot \sqrt{d}\approx 0.5033 \sqrt{d}$.
 
 A minor improvement is possible by choosing the function $\varphi:\R\rightarrow\R$,
\begin{equation*}
 \varphi(x):=\begin{cases}
           \cos\frac{\pi x}{q},& |x|<\frac{q}{2},\\
           0, & \text{otherwise},
          \end{cases}
\end{equation*}
which has a 
distributional $2$-nd derivative $\varphi''=-\pi^2 q^{-2}\varphi+C\delta_{\pm \frac{q}{2}}$ and thus
\begin{equation*}
 \varphi'*\varphi'(0)=\varphi*\varphi''(0)=-\frac{\pi^2}{q^2}\varphi*\varphi(0).
\end{equation*}
 Proceeding as in Lemma \ref{lem:PropPhi}, we get $\psi(0)>0$ for $nq>\frac{1}{2} \cdot \sqrt{d}$, see also \cite{KoLo04,PoTa132}.
\end{remark}

\begin{figure}[htbp]
\centering
\subfigure[Contour plot of $\psi$, compactly supported in the dashed square, positive at origin.]
  {\includegraphics[width=0.45\textwidth]{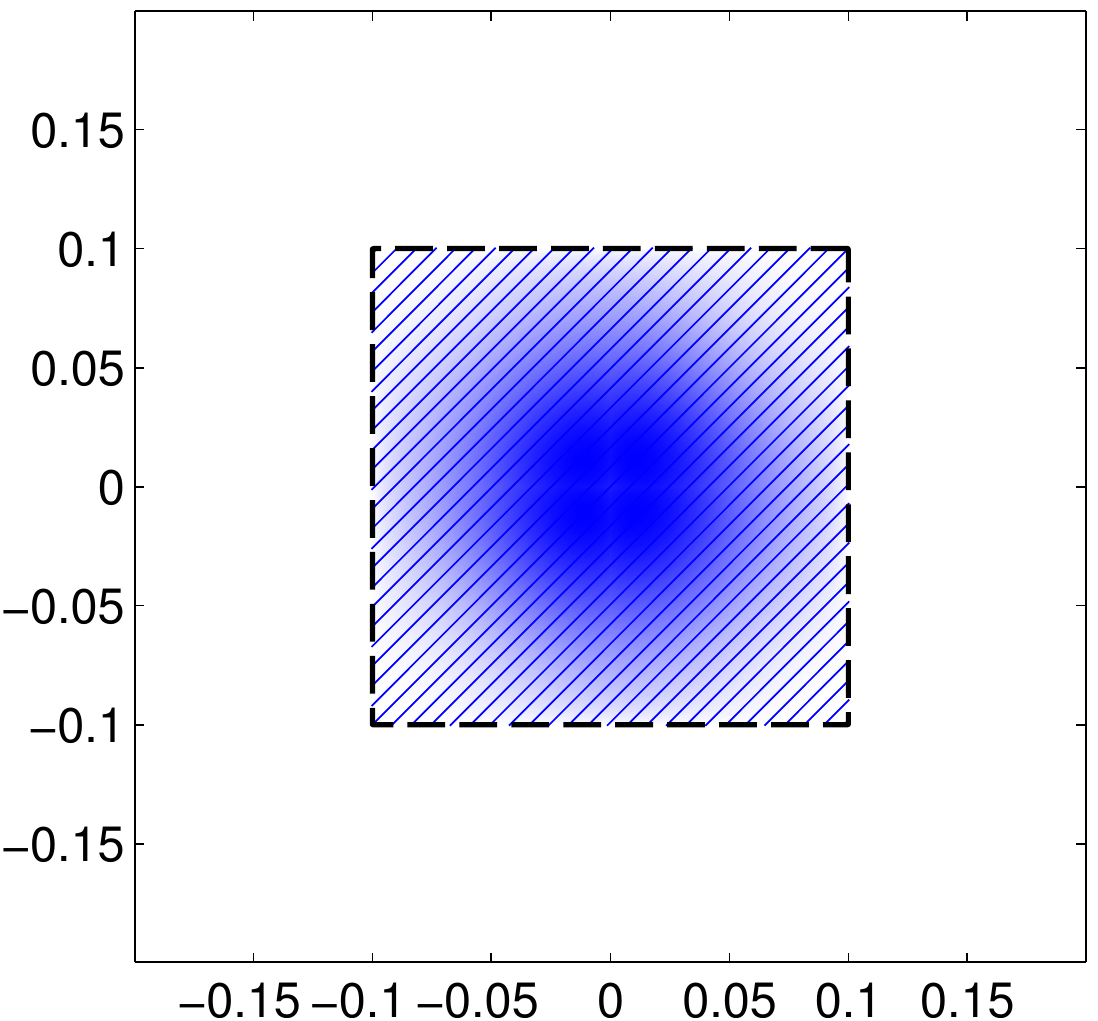}}
  \subfigure[Contour plot of $\hat \psi$, positive inside and non-positive outside the dashed $\ell^2$-ball.]
  {\includegraphics[width=0.45\textwidth]{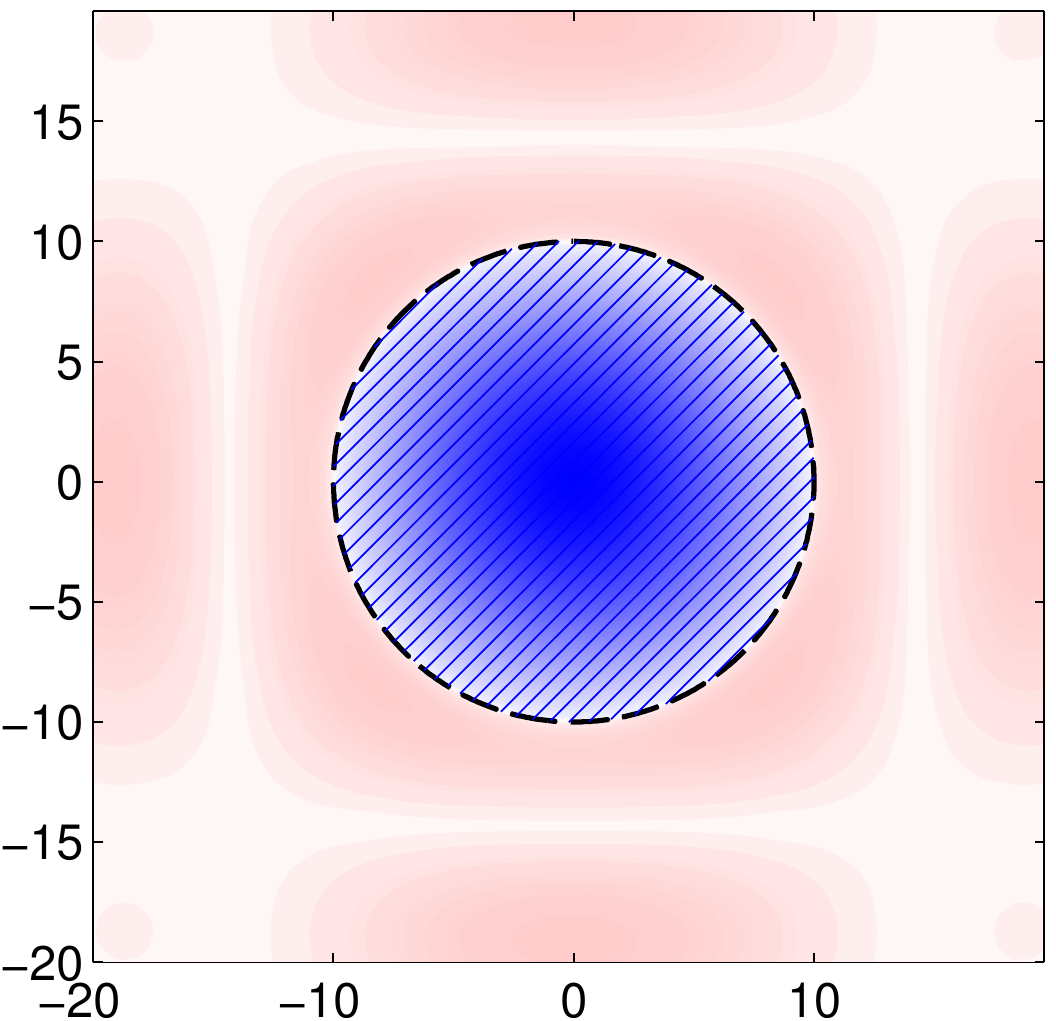}}
\caption{$p=2$, $d=2$, $q=0.1$, $n=10$, positive values blue hatched, negative values red.}
\label{fig:p2}
\end{figure}

\begin{remark}\label{rem:p4}
The construction of Lemma \ref{lem:PropPhi} for $p=4$ reads as
 \begin{equation*}
  \varphi(x):=\begin{cases}
           \left(1-\left(\frac{2x}{q}\right)^2\right)^2,& |x|<\frac{q}{2},\\
           0, & \text{otherwise},
          \end{cases}
 \end{equation*}
 leading to 
 \begin{equation*}
  \varphi*\varphi(0)=\frac{q}{2}\int_{-1}^{1} (1-x^2)^4\dd x=\frac{128q}{315},\qquad
  \varphi''*\varphi''(0)=\frac{64q}{5}\cdot \frac{16}{q^4},
 \end{equation*}
 and we get $\psi(0)>0$ for $nq>\frac{\sqrt[4]{63/2}}{\pi} \cdot \sqrt[4]{d}\approx 0.7541\cdot \sqrt[4]{d}$.
 
 An alternative choice is the function $\varphi:\R\rightarrow\R$,
\begin{equation*}
 \varphi(x):=\begin{cases}
           1+\cos\frac{2\pi x}{q},& |x|<\frac{q}{2},\\
           0, & \text{otherwise},
          \end{cases}
\end{equation*}
which has a weak $2$-nd derivative of bounded variation
 \begin{equation*}
 \varphi''(x)
 = \frac{4\pi^2}{q^2}\left(\chi_{\left(-\frac{q}{2},\frac{q}{2}\right)}(x)-\varphi(x)\right)
 \end{equation*}
 and the Fourier transforms obey the bounds
 $|\hat \varphi(v)|\le C(1+|v|)^{-3}$ and $|\hat \psi(v)|\le C'$.
 Moreover, we have
 \begin{equation*}
 \varphi''*\varphi''
  =\frac{16\pi^4}{q^4}\left(\varphi*\varphi-2\varphi*\chi_{\left(-\frac{q}{2},\frac{q}{2}\right)} + 
  \chi_{\left(-\frac{q}{2},\frac{q}{2}\right)}*\chi_{\left(-\frac{q}{2},\frac{q}{2}\right)}\right),
 \end{equation*}
 $\varphi*\varphi(0)=\frac{3}{2} q$, and $\varphi*\chi_{\left(-\frac{q}{2},\frac{q}{2}\right)}(0)=q$, which yields
 \begin{align*}
  \psi(0)
  &=\left(\varphi*\varphi(0)\right)^{d-1}\left(16\pi^4 n^4 (\varphi*\varphi(0))-d (\varphi''*\varphi''(0))\right)\\
  &=3\cdot 8\pi^4 \left(\varphi*\varphi(0)\right)^{d-1} q \left( n^4 - \frac{d}{3} q^{-4}\right)>0
 \end{align*} 
 if $nq>\sqrt[4]{d/3}\approx 0.7598\cdot \sqrt[4]{d}$.

 Finally, a minor improvement is possible as follows.
 Let $\sigma\approx 2.365$ be the first positive root of $\cos t \sinh t + \cosh t \sin t$ and the function $\varphi:\R\rightarrow\R$,
\begin{equation*}
 \varphi(x):=\begin{cases}
           \frac{\cosh\sigma}{\cosh\sigma-\cos\sigma}\cos(2\sigma x/q) - \frac{\cos\sigma}{\cosh\sigma-\cos\sigma}\cosh(2\sigma x/q),& |x|<\frac{q}{2},\\
           0, & \text{otherwise},
          \end{cases}
\end{equation*}
be given, then $\varphi$ solves the biharmonic eigenvalue problem
\begin{equation*}
 \frac{\dd^4}{\dd x^4}\varphi(x)=\frac{16\sigma^4}{q^4}\varphi(x),\; |x|<\frac{q}{2},\qquad \varphi(\pm \frac{q}{2})=\varphi'(\pm \frac{q}{2})=0.
\end{equation*}
Globally, we have a $1$-st derivative, a weak $2$-nd derivative, and distributional $3$-rd and $4$-th derivatives.
In particular, we have $\varphi^{(4)}=16\sigma^4q^{-4}\varphi^{(4)}+C\delta_{\pm \frac{q}{2}}'$ and thus
\begin{equation*}
 \varphi''*\varphi''(0)=\varphi*\varphi^{(4)}(0)=\frac{16\sigma^4}{q^4}\varphi*\varphi(0).
\end{equation*}
 Proceeding as in Lemma \ref{lem:PropPhi}, we get $\psi(0)>0$ for $nq>\frac{\sigma}{\pi} \cdot \sqrt[4]{d}\approx 0.7528 \cdot \sqrt[4]{d}$.
\end{remark}

\begin{figure}[htbp]
\centering
\subfigure[Contour plot of $\psi$, compactly supported in the dashed square, positive at origin.]
  {\includegraphics[width=0.45\textwidth]{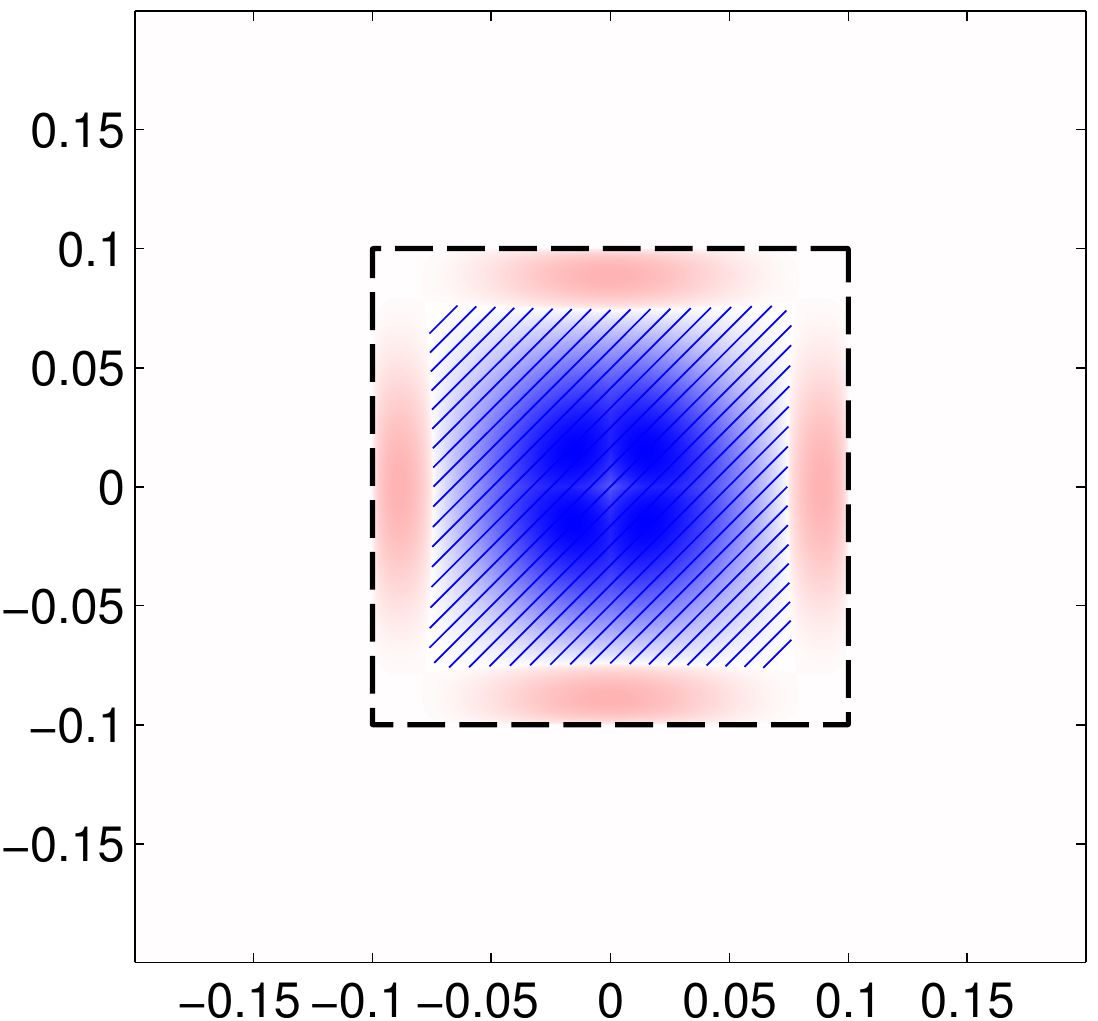}}
  \subfigure[Contour plot of $\hat \psi$, positive inside and non-positive outside the dashed $\ell^4$-ball.]
  {\includegraphics[width=0.45\textwidth]{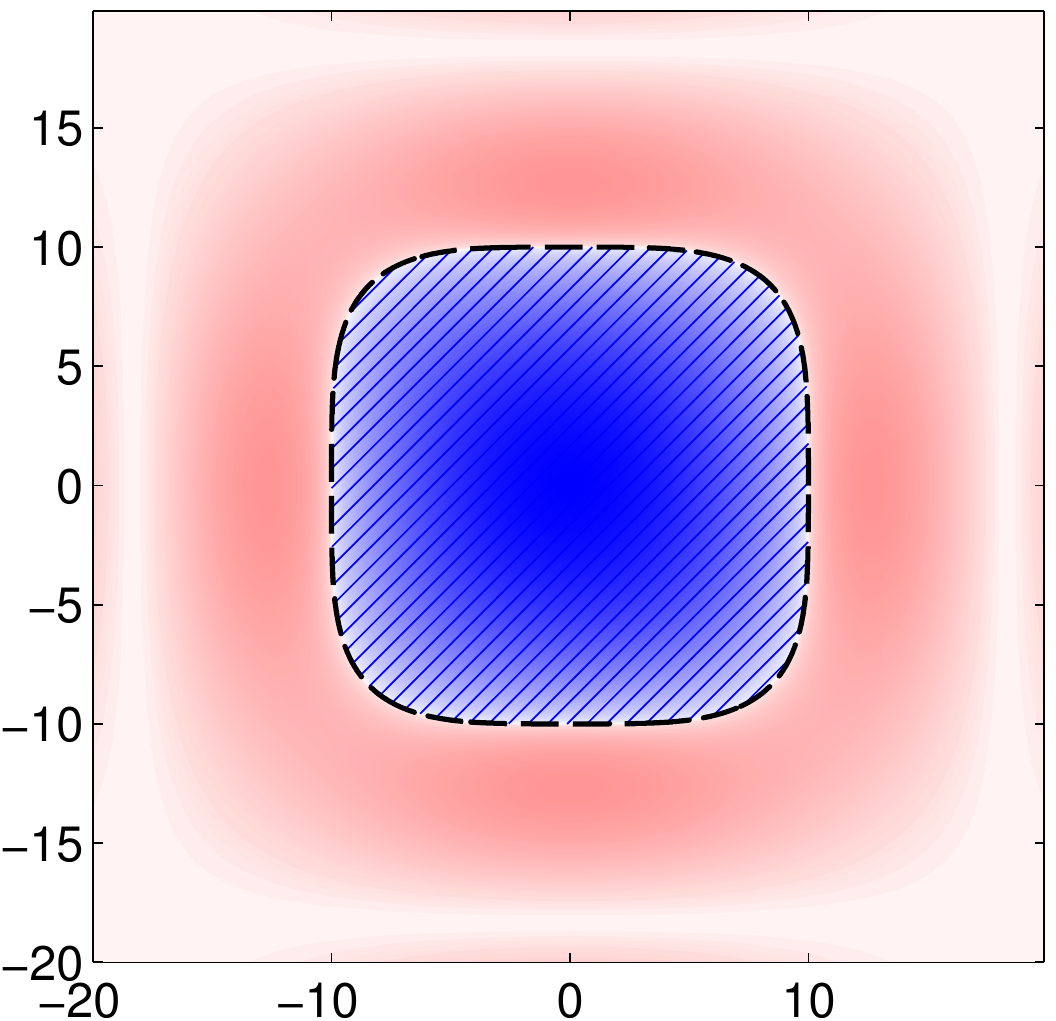}}
\caption{$p=4$, $d=2$, $q=0.1$, $n=10$, positive values blue hatched, negative values red.}
\label{fig:p4}
\end{figure}

\begin{thm}\label{thm:ingham}
 Let $n>0$, $d,p,M\in\N$, $p$ even, $t_j\in[0,1)^d$, $j=1,\hdots,M$, with
 \begin{equation*}
  \min_{r\in\Z^d,\;j\ne\ell}\|t_j-t_{\ell}+r\|_{\infty}>q
 \end{equation*}
 and $nq>\frac{2p+3}{\e\pi} \sqrt[p]{d}$, then there exists a constant $c>0$, depending only on $d,p,n,q$ such that
 \begin{equation*}
  \sum_{\substack{k\in\Z^d\\ \|k\|_{p}\le n}} \left|\sum_{j=1}^M \hat f_j \eip{k t_j}\right|^2
  \ge c \sum_{j=1}^M \left|\hat f_j\right|^2,
 \end{equation*}
 for all choices $\hat f_j\in\C$, $j=1,\hdots,M$.
\end{thm}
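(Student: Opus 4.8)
The plan is to obtain the inequality from the three properties of the auxiliary function $\psi$ established in Lemma~\ref{lem:PropPhi}, via the classical argument behind Ingham-type estimates. Abbreviate $F(k):=\sum_{j=1}^M\hat f_j\eip{kt_j}$ for $k\in\Z^d$, so that the left-hand side of the claimed inequality is $\sum_{\|k\|_p\le n}|F(k)|^2$. Since $p$ is even, set $r:=p/2\in\N$ and apply Lemma~\ref{lem:PropPhi} with this $p$ and $r$; the hypothesis $nq>\tfrac{2p+3}{\e\pi}\sqrt[p]{d}\ge C_p\sqrt[p]{d}$ then yields $\psi(0)>0$, and part~(i) gives $0<\|\hat\psi\|_\infty<\infty$. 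The case $M=1$ is trivial (take $c=1$ and keep only the term $k=0$, which lies in the ball since $n>0$); assuming $M\ge2$, any two of the $t_j$ have wrap-around $\ell^\infty$-distance at most $1/2$, so $q<1/2$, and hence by part~(ii) we have $\supp\psi=[-q,q]^d$ with $\psi(r)=0$ for every $r\in\Z^d\setminus\{0\}$.

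First I would pass from the $\ell^p$-ball to the full lattice. On $\{\|k\|_p\le n\}$ part~(i) gives $0\le\hat\psi(k)\le\|\hat\psi\|_\infty$, so that
\begin{equation*}
 \sum_{\|k\|_p\le n}|F(k)|^2\ \ge\ \frac{1}{\|\hat\psi\|_\infty}\sum_{\|k\|_p\le n}\hat\psi(k)\,|F(k)|^2\ \ge\ \frac{1}{\|\hat\psi\|_\infty}\sum_{k\in\Z^d}\hat\psi(k)\,|F(k)|^2,
\end{equation*}
where the second step uses $\hat\psi(k)\le0$ for $\|k\|_p\ge n$ together with absolute convergence of the full lattice sum; the latter follows since $|F|^2$ is bounded and $\hat\psi(v)=O(\|v\|^{-d-\varepsilon})$, which is a consequence of the bound $|\hat\varphi(v)|\le C(1+|v|)^{-r-1}$ recorded in the proof of Lemma~\ref{lem:PropPhi}. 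Expanding $|F(k)|^2=\sum_{j,\ell}\hat f_j\overline{\hat f_\ell}\,\eip{k(t_j-t_\ell)}$ and interchanging the finite double sum over $(j,\ell)$ with the absolutely convergent sum over $k$ reduces matters to evaluating $\sum_{k\in\Z^d}\hat\psi(k)\,\eip{k(t_j-t_\ell)}$ for each pair $(j,\ell)$.

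Second, I would apply the Poisson summation formula to that inner sum. The function $\psi$ is continuous with compact support: the only term that could lower regularity, namely $\partial_s^p$ applied to $\bigotimes_\ell\varphi*\varphi$, equals in the $s$-th coordinate the convolution $\varphi^{(r)}*\varphi^{(r)}$ of two bounded compactly supported functions and is therefore continuous. With the decay of $\hat\psi$ noted above, Poisson summation gives $\sum_{k\in\Z^d}\hat\psi(k)\,\eip{k(t_j-t_\ell)}=\sum_{m\in\Z^d}\psi(m+t_\ell-t_j)$. For $j\ne\ell$ every argument satisfies $\|m+t_\ell-t_j\|_\infty>q$ by the separation hypothesis, so this sum vanishes; for $j=\ell$ only $m=0$ survives, by the first paragraph, contributing $\psi(0)$. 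Hence $\sum_{k\in\Z^d}\hat\psi(k)\,|F(k)|^2=\psi(0)\sum_{j=1}^M|\hat f_j|^2$, and inserting this into the chain above proves the theorem with $c:=\psi(0)/\|\hat\psi\|_\infty>0$, which depends only on $d,p,n,q$ since $\psi$ does. The only genuinely load-bearing inputs are the sign pattern of $\hat\psi$ on and off the $\ell^p$-ball and the support property of $\psi$, both furnished by Lemma~\ref{lem:PropPhi}; the one point demanding care is the justification of the Poisson identity and of the interchange of summations, which I would settle using that $\psi$ is continuous and compactly supported while $\hat\psi\in L^1(\R^d)$ with $\hat\psi(v)=O(\|v\|^{-d-\varepsilon})$.
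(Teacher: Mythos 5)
Your proof is correct and follows essentially the same route as the paper: extend the weighted sum over the $\ell^p$-ball to the full lattice using the sign pattern of $\hat\psi$, apply Poisson summation, and use the compact support of $\psi$ together with the separation condition to reduce to the diagonal term $\psi(0)\sum_j|\hat f_j|^2$, yielding $c=\psi(0)/\max_v\hat\psi(v)$. The extra care you take with the $M=1$ edge case and the justification of Poisson summation and the interchange of sums is a welcome refinement of the paper's more terse argument, but not a different method.
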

\begin{proof}
 Let the function $\psi$ be as in Lemma \ref{lem:PropPhi}, then
 \begin{align*}
  \max_{v\in\R^d} \hat\psi(v) \sum_{\substack{k\in\Z^d\\ \|k\|_{p}\le n}} \left|\sum_{j=1}^M \hat f_j \eip{k t_j}\right|^2
  &\ge
  \sum_{k\in\Z^d} \hat \psi(k) \left|\sum_{j=1}^M \hat f_j \eip{k t_j}\right|^2\\
  &=
  \sum_{j=1}^M \sum_{\ell=1}^M \hat f_j \overline{\hat f_\ell} \sum_{r\in\Z^d} \psi(t_j-t_{\ell} + r)\\
  &=\psi(0) \sum_{j=1}^M \left|\hat f_j\right|^2,
 \end{align*}
 where the inequality is implied by Lemma \ref{lem:PropPhi}~i), Poisson's summation formula yields the first equality, and Lemma \ref{lem:PropPhi}~ii) and the separation of the nodes $t_j$ the last equality.
 Finally, the assertion follows since Lemma \ref{lem:PropPhi}~iii) assures $\psi(0)>0$.
\end{proof}

\begin{corollary}\label{cor:SepVand}
 With the notation of Section \ref{sec:prelim}, the condition $nq>c_d$, see Table \ref{tab:cd}, or $nq>3+2\log d$ implies $\rank A_n=M$.
\end{corollary}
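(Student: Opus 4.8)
The plan is to reduce the claim to Theorem \ref{thm:ingham} after transporting its $\ell^p$-ball of frequencies into the max-degree cube $\{0,\dots,n\}^d$ that indexes the columns of $A_n$. I would use two elementary facts. First, $\rank A_n=M$ is equivalent to the functions $k\mapsto z_j^k$, $j=1,\dots,M$, being linearly independent on $\{0,\dots,n\}^d$, i.e.\ to triviality of the left kernel of $A_n$. Second, since $|z_j|=1$ for all $j$, this linear independence is invariant under translating the index set and can only be weakened by passing to a subset.

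First I would fix the even integer $p$: for the clean bound take $p:=2\max\{1,\lceil\log d\rceil\}$, so that $\sqrt[p]{d}\le\e^{1/2}$ and $2p+3\le 4\log d+7$; for Table \ref{tab:cd} I would instead take the even $p$ minimizing the resulting threshold, using the sharper constants of Remarks \ref{rem:p2} and \ref{rem:p4} when $p\in\{2,4\}$. A short calculation — using $\sqrt[p]{d}\le\e^{1/2}$, the bound \eqref{eq:Cp}, $q\le\tfrac12$, and the fact that $nq>3+2\log d$ already forces $n$ to be correspondingly large — then shows that the hypothesis $nq>c_d$ (resp.\ $nq>3+2\log d$) implies
\begin{equation*}
 \bfloor{n/2}\,q>\frac{2p+3}{\e\pi}\sqrt[p]{d}.
\end{equation*}
Hence Lemma \ref{lem:PropPhi} and Theorem \ref{thm:ingham} apply with $n$ replaced by $\bfloor{n/2}$: there is $c>0$ with $\sum_{\|k\|_p\le\bfloor{n/2}}\bigl|\sum_j\hat f_j\eip{kt_j}\bigr|^2\ge c\sum_j|\hat f_j|^2$ for all $\hat f\in\C^M$.

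Next I would transfer this to $A_n$. Writing $\mathbf{1}=(1,\dots,1)\in\Z^d$, if $k\in\Z^d$ with $\|k\|_p\le\bfloor{n/2}$ then $\|k\|_\infty\le\|k\|_p\le\bfloor{n/2}$, hence $k+\bfloor{n/2}\mathbf{1}\in\{0,\dots,2\bfloor{n/2}\}^d\subseteq\{0,\dots,n\}^d$. Thus, if $g\in\C^M$ lies in the left kernel of $A_n$, i.e.\ $\sum_j\overline{g_j}z_j^{k'}=0$ for all $k'\in\NZ^d$ with $\|k'\|_\infty\le n$, then substituting $k'=k+\bfloor{n/2}\mathbf{1}$ and taking complex conjugates gives $\sum_j\hat f_j\eip{kt_j}=0$ for all $k\in\Z^d$ with $\|k\|_p\le\bfloor{n/2}$, where $\hat f_j:=g_j\,\overline{z_j^{\bfloor{n/2}\mathbf{1}}}$ satisfies $|\hat f_j|=|g_j|$. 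Inserting this into the Ingham inequality yields $0\ge c\sum_j|g_j|^2$, whence $g=0$ and $\rank A_n=M$.

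The main obstacle is precisely the passage from the centred $\ell^p$-ball of frequencies to the one-sided cube $\{0,\dots,n\}^d$: the cube has side length $n$, so even after a translation it contains an $\ell^p$-ball of radius only $\bfloor{n/2}$, which is what replaces $n$ by (essentially) $n/2$ in Theorem \ref{thm:ingham} and costs a factor two in the threshold — a loss comfortably absorbed by the slack in $3+2\log d$. Everything else is routine: the per-dimension optimization over even $p$ behind Table \ref{tab:cd}, and the Stirling-type estimate \eqref{eq:Cp} together with $\sqrt[p]{d}\le\e^{1/2}$ for the choice $p\approx 2\log d$ that yields the uniform logarithmic bound.
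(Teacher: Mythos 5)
Your argument is essentially the paper's: recenter the exponent cube (the paper does this by factoring out a unimodular diagonal matrix, you by reindexing the left-kernel condition --- the same operation), invoke Theorem \ref{thm:ingham} at half the radius, use $\|k\|_\infty\le\|k\|_p$ to pass from the $\ell^p$-ball to the cube, and choose $p=2\lceil\log d\rceil$ together with the bound \eqref{eq:Cp} to obtain the logarithmic constant. The one place where you give something away is in applying Theorem \ref{thm:ingham} with radius $\lfloor n/2\rfloor$ instead of $n/2$: for odd $n$ this costs up to $q/2$ in the threshold, which the slack in $nq>3+2\log d$ absorbs (as you note), but which is \emph{not} available in the sharp branch $nq>c_d$ with $c_d=2\min_p C_p\sqrt[p]{d}$ --- for instance $d=1$, $q=0.4$, $n=3$ satisfies $nq=1.2>c_1=1.0$, yet $\lfloor n/2\rfloor q=0.4<0.5=c_1/2$, so your reduction does not reach that case. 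The repair is immediate and is what the paper does implicitly: apply the theorem with the real radius $n/2$ and observe that any integer point $k$ with $\|k\|_p\le n/2$ automatically satisfies $\|k\|_\infty\le\lfloor n/2\rfloor$, so the translated frequencies still lie in $\{0,\dots,n\}^d$ and no rounding of the Ingham radius is needed.
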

\begin{proof}
 First note that
 \begin{equation*}
  A_n=\left(z_j^k\right)_{\substack{j=1,\dots,M\\k\in\NZ^d,\|k\|_{\infty}\le n}}
     =\diag\left(z_j^{\left(\lceil\frac{n}{2}\rceil,\hdots,\lceil\frac{n}{2}\rceil\right)}\right) \cdot
      \left(z_j^k\right)_{\substack{j=1,\dots,M\\k\in\{-\lceil\frac{n}{2}\rceil,\hdots,\lfloor\frac{n}{2}\rfloor\}^d}}
 \end{equation*}
 and thus $A_n$ has rank $M$ if and only if the second matrix on the right hand side has this rank.
 Dropping the last column for odd $n$, this is assured by Theorem \ref{thm:ingham} if $nq>2\frac{2p+3}{\e\pi}\sqrt[p]{d}$, since
 \begin{equation*}
  \sum_{\substack{k\in\Z^d\\ \|k\|_{\infty}\le \frac{n}{2}}} \left|\sum_{j=1}^M \hat f_j \eip{k t_j}\right|^2
  \ge \sum_{\substack{k\in\Z^d\\ \|k\|_{p}\le \frac{n}{2}}} \left|\sum_{j=1}^M \hat f_j \eip{k t_j}\right|^2
  > 0
 \end{equation*}
 for $\hat f\in\C^M\setminus\{0\}$.
 Choosing $p:=2\lceil\log d\rceil$ yields the assertion since
 \begin{equation*}
  \frac{2p+3}{\e\pi}\sqrt[p]{d}\le \frac{2p+3}{\e\pi}\sqrt[p]{\e^{\frac{p}{2}}}=\frac{2p+3}{\pi\sqrt{\e}}\le \frac{7+4\log d}{\pi\sqrt{\e}}.
 \end{equation*}
\end{proof}

\begin{table}
\begin{center}
 \begin{tabular}{|r|cccccccccc|}
 \hline
 $d$ & $1$ & $2$ & $3$ & $4$ & $10$ & $16$ & $20$ & $64$ & $100$ & $256$\\
 \hline
 $c_d$ & $1.0$ & $1.4$ & $1.7$ & $2.0$ & $2.7$ & $3.0$ & $3.2$ & $4.0$ & $4.3$ & $5.0$\\
 \hline
 \end{tabular}
 \end{center}
 \caption{Explicit constants $c_d=2\min_{p\in2\N} C_p \sqrt[p]{d}$, see Equation \eqref{eq:Cp}, or its minor improvements via Remark \ref{rem:p2} and \ref{rem:p4}.\label{tab:cd}}
\end{table}

\begin{remark}
 Corollary \ref{cor:SepVand} can also be found in \cite[Cor.~4.7]{KuPo07} and \cite[Lem.~3.1]{PoTa132} under the stronger conditions $(n+1)q>2d$ and $nq>\sqrt{d}$, respectively.
 Regarding a trivial lower bound of the constant $c_d$, a $d$-fold Cartesian product of equispaced parameters in each dimension consists of $M=q^{-d}$ parameters in total and thus
 $\rank A\le (n+1)^d<M$ if $(n+1)q<1$.
 Moreover, a non-trivial lower bound in \cite[Section 5.7]{OlUl12} indicates that functions as in Lemma \ref{lem:PropPhi} necessarily imply that the constant $c_d$ tends to infinity for $d\rightarrow\infty$.
 Going beyond the rank of the Vandermonde matrix $A_n$ and considering its condition number or equivalently upper and lower bounds in the Ingham inequality, \cite[Cor.~4.7]{KuPo07} implies a uniform bound
 \begin{equation*}
  \kappa(A_n W A_n^*)\le 1+\frac{2(2d)^{d+1}}{((n+1)q)^{d+1}-(2d)^{d+1}}\le 2
 \end{equation*}
 for each $q$-separated parameter set with $(n+1)q\ge 4d$ and a certain diagonal preconditioner $W$.
 On the other hand, every weaker condition $(n+1)q\le c\cdot d^{1-\varepsilon}$, $c>0$, $\varepsilon\in(0,1)$, implies at least a subexponential growing condition number, since \cite[Cor.~4.11]{KuPo07} leads to
 \begin{equation*}
  \kappa(A_n A_n^*)=\left(\frac{\lceil(n+1)q\rceil}{\lfloor(n+1)q\rfloor}\right)^d\ge\left(1+\frac{1}{(n+1)q}\right)^d
  \ge\exp\left(d^{\varepsilon}\cdot c^{-1}\cdot\log 2\right)
 \end{equation*}
 for equispaced nodes with $(n+1)q\not\in\N$. For a numerical example, let $d=256$ and consider equispaced nodes with $\N\not\ni (n+1)q\approx 4.98 \approx c_{256}$ - while Corollary \ref{cor:SepVand} assures full rank of $A_n$, we have $\kappa(A_n A_n^*)=(5/4)^{256} \ge 10^{24}$.
\end{remark}

\subsection{Prony's method}\label{sec:prony}

As outlined in Section \ref{sec:prelim}, Prony's method tries to realize the unknown parameters $z_j$ as common roots of $d$-variate polynomials belonging to the
kernel of the multilevel Toeplitz matrix $T_n$.
In \cite{KuPeRoOh16}, we provided the well-known a-priori condition $n>M$ which however implies the need of $\mathcal{O}(M^d)$ trigonometric moments for the reconstruction of $M$ parameters.
We improved this in \cite{KuMoOh16} to the a-priori condition $(n-d-1)q>d^{3/2}$ using Gr\"obner basis arguments and a variant of the flat extension principle \cite{CuFi00,LaMo09}.
This allows for the reconstruction of $M$ well distributed parameters from $\mathcal{O}(M)$ trigonometric moments but the constant deteriorates with larger space dimension.
Subsequently, we refine a variant of the flat extension principle to our max-degree setting and give a simple linear algebra proof.
Together with the improved Ingham inequality, Prony's method succeeds under the weakened condition $(n-1)q>3+2\log d$.

\begin{lemma}\label{lem:rankA_n-stabilization}
With the notation of Section \ref{sec:prelim}, if $\rank A_n=\rank A_{n+1}$ for some $n\in\NZ$, then $\rank A_\ell=M$ for all $\ell\ge n$.
\end{lemma}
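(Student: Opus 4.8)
The plan is to exploit the Vandermonde structure of $A_n$ via the shift operators coming from multiplication by the coordinate functions. Recall that the columns of $A_n$ are indexed by monomials $z^k$ with $\|k\|_\infty\le n$; multiplying a monomial $z^k$ by a coordinate $z_s$ sends the $k$-th column of $A_n$ to the $(k+e_s)$-th column, which is still a column of $A_{n+1}$. So the column space of $A_{n+1}$ is spanned by the columns of $A_n$ together with their images under the $d$ coordinate multiplications. The hypothesis $\rank A_n=\rank A_{n+1}$ therefore says that the column space $W_n:=\operatorname{im}A_n^{\mathsf T}$ (equivalently, the space of evaluation vectors $(z_j^k)_j$ for $\|k\|_\infty\le n$) is already closed under all $d$ shift-by-$z_s$ maps, modulo staying inside $W_{n+1}=W_n$. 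Concretely: for every $s$ and every monomial $z^k$ with $\|k\|_\infty\le n$, the vector $(z_j^{k+e_s})_{j=1}^M$ lies in $W_n$.

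First I would make this precise on the level of evaluation functionals rather than matrices, to avoid index bookkeeping: let $\Lambda:\Pi_n\to\C^M$, $p\mapsto (p(z_j))_j$, so that $\operatorname{im}\Lambda=\operatorname{im}A_n^{\mathsf T}=:W_n$ and $\operatorname{rank}A_n=\dim W_n$. The equality $\dim W_n=\dim W_{n+1}$ plus $W_n\subseteq W_{n+1}$ gives $W_n=W_{n+1}$. Next, for a fixed coordinate $s$, consider the map $z^k\mapsto (z_j^{k}z_{j,s})_j$ on monomials of max-degree $\le n$; its image is contained in $W_{n+1}=W_n$. By linearity this says: for every $p\in\Pi_n$, the vector $(z_j\cdot z_{j,s}\text{-weighted evaluation})_j$, i.e. $(z_{j,s}\,p(z_j))_j$, lies in $W_n$. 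Hence each multiplication operator $X_s$ on $\C^M$ defined by $(X_s w)_j=z_{j,s}w_j$ maps $W_n$ into $W_n$.

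Second, I would bootstrap from degree $n$ to degree $n+1$ and beyond by induction. Suppose we know $W_\ell=W_n$ for some $\ell\ge n$ (base case $\ell=n$). Every monomial $z^k$ with $\|k\|_\infty\le \ell+1$ either has $\|k\|_\infty\le\ell$ — then its evaluation vector is in $W_\ell=W_n$ by hypothesis — or has some entry $k_s=\ell+1\ge 1$, in which case $z^k=z_s\cdot z^{k-e_s}$ with $\|k-e_s\|_\infty\le\ell$, so its evaluation vector is $X_s$ applied to an element of $W_\ell=W_n$, hence lies in $W_n$ by the invariance just established. Therefore $W_{\ell+1}\subseteq W_n$; combined with $W_n\subseteq W_{\ell+1}$ this yields $W_{\ell+1}=W_n$, closing the induction. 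Thus $W_\ell=W_n$ for all $\ell\ge n$, so $\operatorname{rank}A_\ell=\dim W_n$ is constant for $\ell\ge n$.

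Third, I would identify this stabilized rank as $M$. The point is that the full Vandermonde evaluation map on all polynomials, $\C[z_1,\dots,z_d]\to\C^M$, $p\mapsto(p(z_j))_j$, is surjective because the $z_j$ are pairwise distinct (Lagrange-type interpolation: for each $j$ there is a polynomial vanishing at all $z_\ell$, $\ell\ne j$, and not at $z_j$, e.g. a product of affine forms separating the coordinates). So $\bigcup_{\ell\ge 0}W_\ell=\C^M$. But we have shown $W_\ell=W_n$ for all $\ell\ge n$, and trivially $W_\ell\subseteq W_n$ for $\ell\le n$ is false — rather $W_\ell$ is increasing, so $\bigcup_{\ell}W_\ell=\bigcup_{\ell\ge n}W_\ell=W_n$. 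Hence $W_n=\C^M$, i.e. $\operatorname{rank}A_\ell=M$ for all $\ell\ge n$, which is the claim.

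The main obstacle is the second step — verifying the shift-invariance of $W_n$ cleanly. The subtlety is that the shifted monomial $z_s\cdot z^k$ may leave the max-degree-$n$ box even when $z^k$ is in it, which is exactly why the hypothesis is stated for $A_{n+1}$ and not for $A_n$ alone; one must be careful to invoke $W_{n+1}=W_n$ at precisely this point and not before. Everything else is a short linear-algebra induction plus the standard distinct-nodes interpolation fact, and no analysis or the Ingham inequality is needed here (those enter only to \emph{start} the stabilization, via Corollary \ref{cor:SepVand}).
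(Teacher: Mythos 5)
Your argument is essentially the paper's own proof: both rest on the observation that the hypothesis $\rank A_n=\rank A_{n+1}$ makes the span $W_n$ of the evaluation vectors invariant under multiplication by each coordinate, and both close with interpolation at $M$ distinct points to identify the stabilized rank as $M$. One step needs a small repair, though: in your induction you claim that $k_s=\ell+1$ implies $\norm{k-e_s}_\infty\le\ell$, which fails when more than one coordinate of $k$ equals $\ell+1$ (e.g.\ $k=(\ell+1,\ell+1)$ gives $\norm{k-e_1}_\infty=\ell+1$), so as written the reduction is circular in that case. The fix is immediate with the tools you already have: write $k=k'+s$ with $s\in\{0,1\}^d$ the indicator of the maximal coordinates and $\norm{k'}_\infty\le\ell$, and apply the composition of the corresponding operators $X_s$, each of which preserves $W_n$; this is exactly how the paper's decomposition $m=s+k$ with $\norm{s}_\infty\le 1$ handles the same issue.
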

\begin{proof}
Let $r:=\rank A_n$ and pick $k_1,\dots,k_r\in\NZ^d$ with $\norm{k_i}_\infty\le n$ such that the matrix
\begin{equation*}
B:=(z_j^{k_i})_{\substack{j=1,\hdots,M\\i=1,\hdots,r}}\in\C^{M\times r} 
\end{equation*}
has rank~$r$.
Subsequently, we let $m\in\NZ^d$, $m\notin\set{k_1,\dots,k_r}$, $\|m\|_{\infty}\le n+2$ and
\begin{equation*}
B_m:=(B,(z_j^m)_{j=1,\dots,M})\in\C^{M\times r+1}\text{.}
\end{equation*}
If $\norm{m}_\infty\le n+1$, then $B_m$ is a submatrix of $A_{n+1}$ and $r=\rank B\le\rank B_m\le\rank A_{n+1}=r$ by assumption.
If $\norm{m}_\infty=n+2$, then $m=s+k$, $\|s\|_{\infty}=1$, $\|k\|_{\infty}=n+1$, implies
\begin{equation*}
 z_j^m=z_j^s\cdot z_j^k=\sum_{i=1}^r c_j z_j^{s+k_i}, \quad c_j\in\C,\;\|s+k_i\|_{\infty}\le n+1,
\end{equation*}
and thus $(z_j^m)_{j=1,\hdots,M}$ is linear dependent on the columns of $A_{n+1}$ and thus on the columns of $B$, i.e., $\rank B_m=r$.
This shows $\rank A_{n+2}=r$ and inductively $\rank A_\ell=r$ for all $\ell\ge n$
Finally, the space $\Pi_M$ of polynomials of max-degree at most~$M$ interpolates any set of $M$ points and thus $\rank A_M=M$ and $r=M$.
\end{proof}

\begin{thm}\label{thm:kerA}
 With the notation of Section \ref{sec:prelim}, if $\rank A_n=M$, then $V(\ker A_{n+1})=\{z_1,\hdots,z_M\}$.
\end{thm}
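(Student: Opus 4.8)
The plan is to prove the set equality by two inclusions, the first of which is immediate and the second of which is where the hypothesis $\rank A_n=M$ enters. Throughout I identify $\ker A_{n+1}$ with the space of polynomials $p\in\Pi_{n+1}$ satisfying $p(z_j)=0$ for all $j=1,\dots,M$, which is just the row-by-row reading of the condition $A_{n+1}p=0$. In particular every $z_j$ is a common zero of $\ker A_{n+1}$, so $\{z_1,\dots,z_M\}\subseteq V(\ker A_{n+1})$, and it remains to rule out any further common zero.

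For the reverse inclusion I would first reinterpret the hypothesis: $\rank A_n=M$ is equivalent to surjectivity of the evaluation map $\Pi_n\to\C^M$, $p\mapsto(p(z_j))_{j=1}^M$. Hence there exist Lagrange-type polynomials $\ell_1,\dots,\ell_M\in\Pi_n$ with $\ell_i(z_j)=\delta_{ij}$. From these I build two families of elements of $\ker A_{n+1}$. For $i=1,\dots,M$ and $s=1,\dots,d$ set $p_{i,s}:=(x_s-(z_i)_s)\,\ell_i$; multiplying a polynomial of max-degree $n$ by a single coordinate $x_s$ keeps it in $\Pi_{n+1}$, and $p_{i,s}$ vanishes at every $z_j$ (at $z_i$ through the linear factor, at the other nodes through $\ell_i$). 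Also set $p_0:=1-\sum_{i=1}^M\ell_i\in\Pi_n\subseteq\Pi_{n+1}$, which likewise vanishes at every $z_j$. Thus $p_{i,s},p_0\in\ker A_{n+1}$.

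Now let $w\in V(\ker A_{n+1})$ be arbitrary. Evaluating $p_0$ at $w$ gives $\sum_{i=1}^M\ell_i(w)=1$, so $\ell_{i_0}(w)\ne0$ for at least one index $i_0$. Evaluating $p_{i_0,s}$ at $w$ gives $(w_s-(z_{i_0})_s)\,\ell_{i_0}(w)=0$ for every $s$, and dividing by $\ell_{i_0}(w)\ne0$ forces $w=z_{i_0}$. This proves $V(\ker A_{n+1})\subseteq\{z_1,\dots,z_M\}$ and completes the argument.

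I do not anticipate a real obstacle; the one point requiring care is the degree bookkeeping, namely that $x_s\,\Pi_n\subseteq\Pi_{n+1}$, which is exactly what makes the max-degree setting behave as well as the total-degree one used in \cite{KuMoOh16} and is why only the single extra level $A_{n+1}$ is needed. The same computation can be recast as saying that the ideal generated by $\ker A_{n+1}$ is the radical vanishing ideal of $\{z_1,\dots,z_M\}$, which is the flat-extension / stagnating-Hilbert-function reading alluded to in the introduction; if one prefers, Lemma~\ref{lem:rankA_n-stabilization} — applicable because $\rank A_n=M$ already forces $\rank A_{n+1}=M$ — records in addition that all higher Vandermonde matrices retain full rank, though this is not needed for the zero set itself.
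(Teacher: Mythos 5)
Your proof is correct, and it takes a genuinely different route from the paper's. The paper argues by contradiction: it adjoins a hypothetical extra common zero $z_0$ as a new row to form matrices $C_\ell$, shows $\ker C_\ell=\ker A_\ell$ for $\ell\le n+1$ so that $\rank C_n=\rank C_{n+1}=M$, and then invokes Lemma~\ref{lem:rankA_n-stabilization} to deduce $\rank C_n=M+1$, a contradiction. You instead give a direct, constructive argument: surjectivity of the evaluation map $\Pi_n\to\C^M$ (equivalent to $\rank A_n=M$) yields Lagrange polynomials $\ell_i\in\Pi_n$, and the explicit kernel elements $p_0=1-\sum_i\ell_i$ and $p_{i,s}=(x_s-(z_i)_s)\ell_i$ pin down any common zero to one of the $z_j$. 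All steps check out, including the one delicate point of degree bookkeeping ($x_s\Pi_n\subseteq\Pi_{n+1}$ in the max-degree grading), and the side remark that $\rank A_n=M$ forces $\rank A_{n+1}=M$ is trivially true since $A_n$ is a column-submatrix of $A_{n+1}$. What each approach buys: yours is self-contained and does not use Lemma~\ref{lem:rankA_n-stabilization} at all, and it makes transparent exactly why a single extra degree level suffices; the paper's proof, while indirect, leans on the stabilization lemma that the authors want anyway for its own sake (the max-degree Hilbert-function stagnation highlighted in the subsequent remark), so the two results share one piece of machinery. Your construction is essentially the classical ideal-membership/flat-extension argument made explicit, and would serve equally well as the paper's proof.
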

\begin{proof}
Clearly,
``$\supset$'' holds.
To prove the reverse inclusion,
assume that there is a
\begin{equation*}
z_0\in V(\ker A_{n+1})\setminus\set{z_1,\dots,z_M}\text{.}
\end{equation*}
For $\ell\in\NZ$ consider the matrix
\begin{equation*}
C_\ell:=(z_j^k)_{\substack{j=0,\dots,M\\k\in\NZ^d,\norm{k}_\infty\le\ell}}\in\C^{M+1\times(\ell+1)^d}\text{,}
\end{equation*}
i.e.~$A_\ell$ extended by the row $(z_0^k)_{\norm{k}_\infty\le\ell}$.
We show that $\ker C_\ell=\ker A_\ell$ for all $\ell\le n+1$.
The inclusion ``$\subset$'' is clear.
Let $p\in\ker A_\ell$.
Since $\ell\le n+1$,
we can consider $p$ as an element of $\ker A_{n+1}$,
which, by assumption, yields $p(z_0)=0$.
By the definition of $C_\ell$
we have $C_\ell p=0$
(considering $p$ as an element of $\ker A_\ell$ again).
Thus $\ker C_\ell=\ker A_\ell$ for all $\ell\le n+1$,
and therefore $\rank C_\ell=\rank A_\ell$.
In particular,
$\rank C_n=\rank A_n=M$
and
$\rank C_{n+1}=\rank A_{n+1}=M$,
thus $\rank C_n=\rank C_{n+1}$.
By Lemma~\ref{lem:rankA_n-stabilization}
we have $\rank C_n=\lvert\set{z_0,\dots,z_M}\rvert=M+1$,
a contradiction.
\end{proof}

\begin{remark}
Inspection of the proofs readily shows that the results of Lemma \ref{lem:rankA_n-stabilization} and Theorem \ref{thm:kerA} hold for arbitrary pairwise different $z_j\in\C^d$.
The Vandermonde matrix $A_n$ is the representation matrix of the evaluation homomorphism which maps a polynomial to its values at the prescribed points $z_j$.
In algebraic geometry and with the max-degree replaced by the total degree, the rank of such maps is called Hilbert function.
It is well known that such a function is strictly increasing with $n$ up to stagnation at the value $M$, see e.g.~\cite[App.~B]{Va98}.
The previous Lemma \ref{lem:rankA_n-stabilization} shows by elementary arguments that this is true also for the max-degree.
\end{remark}

\begin{corollary}\label{cor:prony}
 With the notation of Section \ref{sec:prelim}, the condition $(n-1)q>c_d$, see Table \ref{tab:cd}, or $(n-1)q>3+2\log d$ implies $V(\ker T_n)=\{z_1,\hdots,z_M\}$.
\end{corollary}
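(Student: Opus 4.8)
The plan is to deduce the statement from Theorem~\ref{thm:kerA} and Corollary~\ref{cor:SepVand} — both applied with the moment order $n$ replaced by $n-1$ — and then to transfer the conclusion from the Vandermonde kernel $\ker A_n$ to the Toeplitz kernel $\ker T_n$ by means of the factorization $T_n=A_n^*DA_n$ noted in Section~\ref{sec:prelim}.

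First I would invoke Corollary~\ref{cor:SepVand} for the matrix $A_{n-1}$: the hypothesis $(n-1)q>c_d$ (respectively $(n-1)q>3+2\log d$) is precisely the condition of that corollary with $n$ replaced by $n-1$, so it gives $\rank A_{n-1}=M$. Since $A_n$ arises from $A_{n-1}$ by adjoining columns and $\rank A_n\le M$, this forces $\rank A_n=M$ as well. Feeding $\rank A_{n-1}=M$ into Theorem~\ref{thm:kerA} (again with $n$ replaced by $n-1$) yields $V(\ker A_n)=\{z_1,\dots,z_M\}$.

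It then remains to compare $\ker T_n$ with $\ker A_n$, and I would in fact show that they coincide. The inclusion $\ker A_n\subseteq\ker T_n$ is immediate from $T_n=A_n^*DA_n$. For the reverse, if $T_np=0$ then $A_n^*(DA_np)=0$, i.e.\ $DA_np\in\ker A_n^*=(\im A_n)^\perp$; but $\rank A_n=M$ means $\im A_n=\C^M$, hence $DA_np=0$, and since $D=\diag(\hat f_j)$ is invertible we conclude $A_np=0$. Therefore $\ker T_n=\ker A_n$ and $V(\ker T_n)=V(\ker A_n)=\{z_1,\dots,z_M\}$.

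There is no genuine obstacle here: once Theorem~\ref{thm:kerA} and Corollary~\ref{cor:SepVand} are in hand, the argument is essentially bookkeeping. The only point that requires attention is the index shift $n\mapsto n-1$ and checking that it is consistent on both sides — namely that $\rank A_{n-1}=M$ is exactly the hypothesis Theorem~\ref{thm:kerA} needs in order to certify $V(\ker A_n)$, while the (automatic) stronger fact $\rank A_n=M$ is exactly what lets the invertibility of $D$ push the identity $V(\ker A_n)=\{z_1,\dots,z_M\}$ through to $\ker T_n$.
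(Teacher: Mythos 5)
Your proposal is correct and follows essentially the same route as the paper: apply Corollary~\ref{cor:SepVand} at level $n-1$ to get $\rank A_{n-1}=M$, feed this into Theorem~\ref{thm:kerA} to obtain $V(\ker A_n)=\{z_1,\dots,z_M\}$, and then use the factorization $T_n=A_n^*DA_n$ together with $\rank A_n=M$ and the invertibility of $D$ to show $\ker T_n=\ker A_n$. The only difference is cosmetic (you peel off $D$ after using $\ker A_n^*=\{0\}$, while the paper argues via $\ker(A_n^*D)=\im(D^*A_n)^{\perp}=\{0\}$ in one step), and you helpfully make explicit the step $\rank A_{n-1}=M\Rightarrow\rank A_n=M$ that the paper leaves implicit.
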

\begin{proof}
 Corollary \ref{cor:SepVand} implies $\rank A_{n-1}=M$ and thus $V(\ker A_n)=\{z_1,\hdots,z_M\}$ follows from Theorem \ref{thm:kerA}.
 Finally note that always $\ker A_n\subset\ker T_n$ and equality follows from $\rank A_n=M$
 since $p\in\ker T_n$ implies $A_n p\in\ker (A_n^* D)=\im(D^* A_n)^{\bot}=\{0\}$ and thus $p\in\ker A_n$.
\end{proof}

\section{Summary}\label{sect:sum}

We have shown that Prony's method identifies a Dirac ensemble from its first moments provided an associated Vandermonde matrix has full rank.
This is indeed fulfilled if the number of given coefficients exceeds a constant divided by the separation distance of the unknown parameters.
Explicit bounds for the involved constant grow logarithmically in the space dimension and improve over previous constructions also for small space dimensions.

\textbf{Acknowledgment.}
The authors thank the referees for their valuable suggestions and additional pointers to related literature.
Moreover, we gratefully acknowledge support by the DFG within the research training group 1916: Combinatorial structures in geometry and by the DAAD-PRIME-program.

\bibliographystyle{abbrv}
\bibliography{../references}
\end{document}